\documentclass[12pt]{amsart}

\usepackage{amsmath,amssymb,amsfonts,amsthm}
\usepackage{mathtools}
\usepackage[margin=1in]{geometry}

\usepackage{graphicx}
\graphicspath{ {./figures/} } 
\usepackage{tikz}
\usetikzlibrary{arrows.meta,graphs,shapes.geometric,decorations.pathreplacing}
\usepackage{pgfplots}
\pgfplotsset{compat=1.18}

\usepackage{booktabs}
\usepackage{multicol}
\usepackage{enumitem}

\usepackage[numbers,sort&compress]{natbib}

\usepackage{xcolor}
\definecolor{myteal}{RGB}{0,128,128}
\usepackage[pdfborder={0 0 0}]{hyperref}
\hypersetup{
  colorlinks=true,
  citecolor=myteal,
  linkcolor=blue,
  urlcolor=myteal
}

\usepackage{aliascnt}
\usepackage[capitalise, noabbrev]{cleveref}

\usepackage{parskip}

\usepackage{listings}
\usepackage{booktabs}
\newcommand{\newaliastheorem}[3]{%
  \newaliascnt{#1}{thm}%
  \newtheorem{#1}[#1]{#2}%
  \aliascntresetthe{#1}%
  \crefname{#1}{#2}{#3}%
  \Crefname{#1}{#2}{#3}%
}

\theoremstyle{plain}
\newtheorem{thm}{Theorem}[section]
\crefname{thm}{Theorem}{Theorems}
\Crefname{thm}{Theorem}{Theorems}

\newaliastheorem{lem}{Lemma}{Lemmas}
\newaliastheorem{prop}{Proposition}{Propositions}
\newaliastheorem{cor}{Corollary}{Corollaries}
\newaliastheorem{clm}{Claim}{Claims}
\newaliastheorem{conj}{Conjecture}{Conjectures}

\theoremstyle{definition}
\newaliastheorem{defn}{Definition}{Definitions}
\newaliastheorem{ex}{Example}{Examples}
\newaliastheorem{notation}{Notation}{Notations}
\newaliastheorem{setup}{Set-up}{Set-ups}
\newaliastheorem{remark}{Remark}{Remarks}
\newaliastheorem{problem}{Problem}{Problems}
\newaliastheorem{question}{Question}{Questions}
\newaliastheorem{observation}{Observation}{Observation}

\setlist[enumerate]{leftmargin=*,itemsep=2pt,topsep=4pt}
\SetEnumerateShortLabel{a}{\textup{(\alph*)}}
\SetEnumerateShortLabel{A}{\textup{(\Alph*)}}
\SetEnumerateShortLabel{1}{\textup{(\arabic*)}}
\SetEnumerateShortLabel{i}{\textup{(\roman*)}}
\SetEnumerateShortLabel{I}{\textup{(\Roman*)}}

\makeatletter
\newtheorem*{rep@theorem}{\rep@title}
\newcommand{\newreptheorem}[2]{%
\newenvironment{rep#1}[1]{%
 \def\rep@title{#2 \ref{##1}}%
 \begin{rep@theorem}}%
 {\end{rep@theorem}}}
\makeatother

\newreptheorem{theorem}{Theorem}

\begin{document}

\title{Symmetric and unimodal independence polynomials \\ of trees}

\author[Hibi]{Takayuki Hibi}
\address[T.~Hibi]{Department of Pure and Applied Mathematics, Graduate School of Information Science and Technology, Osaka University, Suita, Osaka 565--0871, Japan}
\email{\href{mailto:hibi@math.sci.osaka-u.ac.jp}{hibi@math.sci.osaka-u.ac.jp}}

\author[Kara]{Selvi Kara}
\address[S.~Kara]{Department of Mathematics, Bryn Mawr College, Bryn Mawr, PA 19010}
\email{\href{mailto:skara@brynmawr.edu}{skara@brynmawr.edu}}

\author[Vien]{Dalena Vien}
\address[D.~Vien]{Department of Mathematics, Bryn Mawr College, Bryn Mawr, PA 19010}
\email{\href{mailto:dvien@brynmawr.edu}{dvien@brynmawr.edu}}

\keywords{independence polynomial, tree, symmetric and unimodal polynomial}
	
\subjclass[2020]{05C30, 05C31}

% 05C25  	Graphs and abstract algebra 
% 05C30  	Enumeration in graph theory
% 05C31  	Graph polynomials
% 05C38  	Paths and cycles [See also 90B10]
    	   	
	\thanks{The research for the present paper was initiated while the first author visited Bryn Mawr College, Pennsylvania, February 28 -- March 21, 2026. } 

\begin{abstract}
   Given $n \geq 1$, we study the existence of a tree on $n$ vertices whose independence polynomial is symmetric and unimodal as well as the existence of a symmetric and unimodal independence polynomial of degree $n$ of a tree. 
\end{abstract}

\maketitle

\section*{Introduction}
Every graph considered in this paper is finite and simple. For a graph \(G\), let
\(g_i(G)\) denote the number of independent sets of \(G\) of cardinality \(i\). The
\emph{independence polynomial} of \(G\) is
\[
P_G(x)=\sum_{i\geq 0} g_i(G)x^i.
\]
Equivalently, independence polynomials are the face polynomials of independence
complexes of graphs, and are therefore closely related to the study of face vectors of flag
complexes. They have been studied extensively from several points of view
\cite{Levit_Mandrescu,Unimodal_ind_poly,location_roots_ind_poly,location_complex_roots,Claw_free_ind_poly} in graph theory.

In 1987 \cite{tree_unimodal_conj}, Alavi, Malde, Schwenk, and Erd\H{o}s conjectured  that the independence
polynomials of trees are unimodal. As of April 2026, the conjecture remains open in general, and has been computationally verified for trees on at most 
29 vertices \cite{reynolds2026mean}. Motivated by this problem, we study
the existence of trees whose independence polynomials are symmetric and unimodal.

A finite sequence \(a_0,a_1,\ldots,a_d\) of nonnegative integers is called
\emph{symmetric} if \(a_i=a_{d-i}\) for each \(0\leq i\leq d\). A symmetric sequence
\(a_0,a_1,\ldots,a_d\) is called \emph{unimodal} if \(a_0\leq a_1\leq \cdots \leq a_{\lfloor d/2\rfloor}\). A polynomial \(P(x)=a_0+a_1x+\cdots+a_dx^d\)
with \(a_i\in \mathbb{Z}_{\geq 0}\) is called \emph{symmetric} (resp. \emph{unimodal})
if its coefficient sequence is symmetric (resp. unimodal).

More precisely, we address two existence questions. First, for which integers \(n\) does
there exist a tree on \(n\) vertices whose independence polynomial is symmetric and
unimodal? Second, for which integers \(d\) does there exist a tree whose independence
polynomial is symmetric and unimodal of degree \(d\)?

Our method combines two ingredients. The first is a new gluing construction for rooted
graphs, introduced in the \emph{Bridge Lemma} (Lemma \ref{lem:admissible-gluing}).
This lemma allows us to construct larger rooted trees from smaller ones while preserving
the structure needed in our arguments. The second is the \(\gamma\)-positivity viewpoint
for symmetric polynomials. If \(h(x)\) is symmetric with center of symmetry \(d/2\), then
it admits a unique expansion
\[
h(x)=\sum_{i=0}^{\lfloor d/2\rfloor}\gamma_i x^i(1+x)^{d-2i}.
\]
When all \(\gamma_i\) are nonnegative, the polynomial \(h(x)\) is said to be
\(\gamma\)-positive. This viewpoint provides a convenient route to unimodality; see
\cite{Gamma_positive}. In the body of the paper, we adapt it to rooted trees through the
notion of \(\gamma\)-admissibility.

The notion of $\gamma$-positivity appeared first in the work of D. Foata and M. Schützenberger \cite{gamma_positive_origins} and it is extremely useful as it directly implies symmetry and unimodality. This application of $\gamma$-positivity appears widely in combinatorial and geometric contexts (see \cite{gamma_positive_app,gamma_positive_app_2,gamma_positive_app_3,gamma_positive_app_4,gamma_positive_app_5}).

The main result of the present paper is the following.

\begin{thm}
\label{CHITOSE}
    {\rm (1)} Given \(n \geq 1\) with \(n \notin \{2,4,5,7,10\}\), there is a tree on
    \(n\) vertices whose independence polynomial is symmetric and unimodal.

    {\rm (2)} Given \(d \geq 1\) with \(d \neq 3\), there is a tree whose independence
    polynomial is symmetric and unimodal of degree \(d\).
\end{thm}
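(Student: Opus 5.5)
The plan is to build explicit families of trees whose independence polynomials are $\gamma$-positive, and then fill in the small cases by direct computation. First I would record the basic recursions. For a vertex $v$ of $G$ we have $P_G(x)=P_{G-v}(x)+x\,P_{G-N[v]}(x)$, and $P_{G_1\sqcup G_2}=P_{G_1}P_{G_2}$. Since a product of $\gamma$-positive polynomials is $\gamma$-positive, with the centers of symmetry adding, disjoint unions are easy to control. The basic building blocks are $P_{K_1}=1+x$, which is $\gamma$-positive of degree $1$, and the path $P_3$, with $P_{P_3}=1+3x+x^2=(1+x)^2+x$, which is $\gamma$-positive of degree $2$. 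The degree-$1$ case $n=1$ is $K_1$ itself. For $n=2,4,5,7,10$ I would verify nonexistence by exhaustive enumeration of the finitely many trees, which is routine.

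The key structural step is a gluing lemma in the spirit of the Bridge Lemma (Lemma~\ref{lem:admissible-gluing}). Call a rooted tree $(T,r)$ \emph{$\gamma$-admissible} of degree $d$ if $P_T$ and $P_{T-r}$ (equivalently $P_{T-N[r]}$, suitably shifted) are $\gamma$-positive with compatible centers. An example is the root of a pendant edge, since $P_{T}=P_{T-v}+xP_{T-N[v]}$. The lemma should then say the following: joining the roots of two admissible rooted trees by a new path of appropriate length, or attaching a new leaf, yields an admissible tree. The degree and the vertex count change by predictable amounts, say $(n,d)\mapsto(n_1+n_2+c,\,d_1+d_2+c')$.

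A concrete device I would try is to attach a pendant leaf to every vertex of a tree $S$ on $m$ vertices, which produces the corona $S\circ K_1$. Its independence polynomial is known to be $\sum_{A} x^{|A|}(1+x)^{m-|A|}$ over independent sets $A$ of $S$, that is, $(1+x)^m P_S\!\big(x/(1+x)^2\big)$. This is $\gamma$-positive with $\gamma_i=g_i(S)$ and degree $m$, on $n=2m$ vertices. This settles every even $n\ge 2$ except those excluded, because corona trees exist with $m\ge 3$, and also gives every degree $d=m\ge 1$. The case $d=2$ needs care since $n=4$ fails, but $P_3$ handles it. For $d=3$ the corona of a tree on $3$ vertices would seem to give degree $3$, yet the theorem excludes $d=3$. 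This suggests that my formula $\gamma_i=g_i(S)$ must be checked: the corona of $P_3$ has degree $3$ because a maximum independent set has $3$ vertices. If it fails, the flaw lies there, and I would recompute to see whether the corona term should be $x^{|A|}(1+x)^{m-2|A|}$-type, with each chosen root killing its leaf and the degree being $m$. Assuming this repairs correctly, the degree-$d$ family uses corona trees of degree $m$, with $d=3$ handled (and shown impossible) by enumeration.

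For odd $n$ I would modify the corona by a bridge: replace one pendant edge by a pendant $P_3$ attached at its center, or glue a $K_1$ via the Bridge Lemma. The aim is to show that the added $(1+x)$ or $x$ factor preserves $\gamma$-positivity. This gives all odd $n\ge 9$ together with $n=3$ and small sporadic cases, with $n=11,13$ checked by hand. The main obstacle is exactly this odd-$n$ parity adjustment, since naive leaf additions break symmetry. I would first try the pendant-$P_3$ replacement and verify $\gamma$-nonnegativity by expanding the recursion at the attachment vertex.
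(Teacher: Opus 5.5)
There is a genuine gap, and it is in your central construction. The corona formula is wrong. In $S\circ K_1$ each vertex of $S$ outside an independent set $A$ frees only \emph{one} leaf, so $P_{S\circ K_1}(x)=\sum_A x^{|A|}(1+x)^{m-|A|}=(1+x)^mP_S\bigl(x/(1+x)\bigr)$, not $(1+x)^mP_S\bigl(x/(1+x)^2\bigr)$. This polynomial is not symmetric: $P_2\circ K_1=P_4$ has $P_{P_4}(x)=1+4x+3x^2$, and $P_3\circ K_1$ gives $1+6x+10x^2+5x^3$. Your family would also produce symmetric trees on $4$ and $10$ vertices, so the contradiction you noticed is not only with $d=3$.

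The correct repair is not an ``$x^{|A|}(1+x)^{m-2|A|}$-type'' term. It is attaching \emph{two} leaves to every vertex: $P_{S\circ 2K_1}(x)=\sum_A x^{|A|}(1+x)^{2m-2|A|}$ is $\gamma$-positive with $\gamma_i=g_i(S)$. This is the remark in the paper, and for $S$ a path it is exactly the paper's iterated bridging of $(R_3,r)$. But $S\circ 2K_1$ has $3m$ vertices and degree $2m$, so after the repair you only cover $n\equiv 0\pmod 3$ and even $d$. Your even/odd split in $n$ is therefore the wrong invariant.

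The remaining cases are the substance of the theorem: all $n\not\equiv 0\pmod 3$ outside the excluded list (including even ones such as $8,14,16,20$) and all odd $d\ge 5$. Your proposal gives no construction for them.

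Gluing a $K_1$ provably fails. First, $(K_1,s)$ is not $\gamma$-admissible, since $P_{K_1}$ and $P_{K_1-s}$ have different degrees. Second, if $(T,r)$ is $\gamma$-admissible with $P_T=(1+x)^dA(y)$ and $P_{T-r}=(1+x)^dB(y)$, then adding a leaf at $r$ gives $P_W=P_T+xP_{T-r}=(1+x)^d\bigl(A+xB\bigr)$, of degree $d+1$. Here $x^{d+1}P_W(1/x)=(1+x)^d(xA+B)$, which equals $P_W$ only if $A=B$; that is impossible, since $A-B$ encodes $xP_{T-N[r]}\neq 0$.

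The pendant-$P_3$ replacement is neither specified nor checked. Moreover, the paper's exhaustive search shows there is no $\gamma$-admissible rooted tree of order $\not\equiv 0\pmod 3$ with $4\le n\le 18$. Any recursive gluing scheme of this kind must therefore start from large, non-obvious seeds.

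That is what the paper does:
\begin{itemize}
\item It finds $\gamma$-admissible $(R_{19},r)$ and $(R_{20},r)$ of degree $13$.
\item It bridges copies of $(R_3,r)$ onto them from the left. The Bridge Lemma needs $A-B\in\mathbb{Z}_{\geq 0}[y]$ only for the left factor, and for $R_3$ this difference is $y$.
\item This yields all $n\ge 19$ and all odd $d\ge 13$.
\item The small orders $8,11,13,14,16,17$ and degrees $5,7,9,11$ are handled by the explicit caterpillars $G_8,\dots,G_{17}$.
\end{itemize}
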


\section{Bridge Lemma}
In this section, we introduce a new technique to construct symmetric independence polynomials of trees. Given a graph $G$ and a vertex $r\in V(G)$, the \emph{rooted graph} $(G,r)$ is the graph $G$ with vertex $r$ viewed as a root.  Each vertex of $G$ can be chosen as a root.

\begin{defn}
    Given two rooted graphs $(G,r)$ and $(H,s)$, the \emph{bridging} of $(G,r)$ and  $(H,s)$, denoted by $(G,r)\vee(H,s)$, is the rooted graph with $r$ its root which is obtained from the disjoint union of $(G,r)$ and $(H,s)$ by adding the edge $\{r,s\}$. 
\end{defn}

\begin{remark}
Bridging is not necessarily associative. Let $(A,r), (B,s)$, and $ (C,t)$ be rooted graphs given in \Cref{fig:fig1_0}.  

\begin{figure}[ht]
    \centering
    \includegraphics{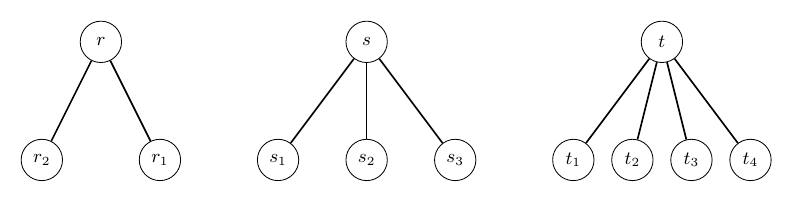}
    \caption{Rooted graphs $(A,r),  (B,s)$ and $ (C,t)$ from left to right}
    \label{fig:fig1_0}
\end{figure}
Notice that 
 \(((A,r)\vee(B,s))\vee(C,t) \neq (A,r)\vee((B,s)\vee(C,t))\) as shown in \Cref{fig:fig1_1}.
\end{remark}

\begin{figure}[ht]
    \centering
    \includegraphics[scale=0.9]{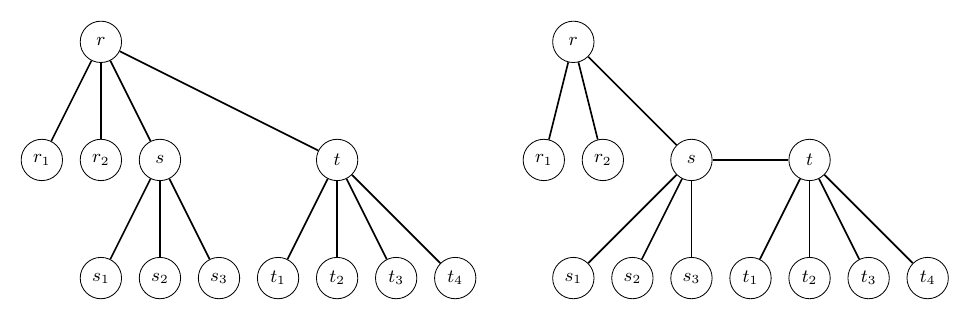}
    \caption{$((A,r)\vee (B,s))\vee (C,t)$ (left) and $(A,r)\vee ((B,s)\vee (C,t))$ (right) }
    \label{fig:fig1_1}
\end{figure}

 To obtain symmetric and unimodal independence polynomials, we use \(\gamma\)-positivity (see \cite{Gamma_positive}. for a survey on $\gamma$-positivity). Although \(\gamma\)-positivity is
usually introduced for symmetric polynomials, for our purposes it is convenient to adopt
the following equivalent formulation. Set
\[
y=\frac{x}{(1+x)^2}.
\]

\begin{defn}\label{def:gamma-positive}
A polynomial \(h(x)\in \mathbb{R}[x]\) is called \emph{\(\gamma\)-positive} if there exist an
integer \(d\geq 0\) and nonnegative real numbers
\(\gamma_0,\gamma_1,\ldots,\gamma_{\lfloor d/2\rfloor}\) such that
\[
h(x)=\sum_{i=0}^{\lfloor d/2\rfloor}\gamma_i x^i(1+x)^{d-2i}.
\]
Equivalently, \(h(x)=(1+x)^d~\Gamma_h(y)\)
where \(\Gamma_h(y)=\sum_{i=0}^{\lfloor d/2\rfloor}\gamma_i y^i \in \mathbb{R}_{\geq 0}[y]\). 
% The polynomial \(\Gamma_h(y)\) is called the \(\gamma\)-polynomial of \(h(x)\).
\end{defn}

The following is well-known in the literature but we include it here for completeness.

\begin{lem}\label{lem:gamma-positive}
If \(h(x)\) is \(\gamma\)-positive, then \(h(x)\) is symmetric and unimodal. If, in
addition, \(h(0)\neq 0\), then \(\deg h(x)=d\).
\end{lem}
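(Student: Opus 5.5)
The plan is to work directly from the expansion $h(x)=\sum_{i=0}^{\lfloor d/2\rfloor}\gamma_i x^i(1+x)^{d-2i}$ with all $\gamma_i\ge 0$. The key observation is that each summand $x^i(1+x)^{d-2i}$ is itself symmetric and unimodal about the same center $d/2$. Its coefficients are $\binom{d-2i}{k-i}$ for $i\le k\le d-i$ and $0$ otherwise. The substitution $k\mapsto d-k$ sends $k-i$ to $(d-2i)-(k-i)$, so symmetry about $d/2$ follows from $\binom{m}{j}=\binom{m}{m-j}$.

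For unimodality of a single summand:
\begin{itemize}
\item For $k<i$ the coefficient is $0$.
\item For $i\le k\le \lfloor d/2\rfloor$ the index $j=k-i$ ranges over $0,\dots,\lfloor d/2\rfloor-i=\lfloor (d-2i)/2\rfloor$.
\item On that range the binomial coefficients $\binom{d-2i}{j}$ are nondecreasing.
\end{itemize}
Hence the coefficient sequence of each summand is nondecreasing on $0\le k\le\lfloor d/2\rfloor$.

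Next, nonnegative combinations preserve both properties, since both are defined by linear equalities or inequalities among coefficients with respect to the fixed $d$. Writing $h(x)=\sum_k a_k x^k$ with $a_k=\sum_i \gamma_i\binom{d-2i}{k-i}$, we get $a_k=a_{d-k}$ and $a_0\le a_1\le\cdots\le a_{\lfloor d/2\rfloor}$. All $a_k\ge 0$ because the $\gamma_i$ are nonnegative.

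One subtlety is that the paper defines symmetry relative to the actual degree of the polynomial, whereas the above shows symmetry with respect to $d$. This is where the second claim enters. The constant term is $a_0=h(0)=\gamma_0$. If $h(0)\ne 0$, then $a_d=a_0\ne 0$, while $a_k=0$ for $k>d$ since every summand has degree at most $d$. So $\deg h=d$, and symmetry and unimodality hold in the paper's sense.

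The case $h(0)=0$ must also be handled for the first claim. Let $m$ be the least index with $\gamma_m\ne 0$; if no such index exists, $h=0$, which is trivially fine. Then $h=x^m\sum_{i\ge m}\gamma_i x^{i-m}(1+x)^{(d-2m)-2(i-m)}$, so $h$ is $x^m$ times a $\gamma$-positive polynomial with $d'=d-2m$ and nonzero constant term. In this case I would interpret the statement as referring to the latter polynomial, or simply note that the symmetry is about $d/2$.

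There is no real obstacle here. The only point needing care is this bookkeeping between the parameter $d$ and the true degree, which the hypothesis $h(0)\neq 0$ resolves.
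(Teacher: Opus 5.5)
Your proof is correct and follows the paper's argument. Both proofs write $h$ as a nonnegative combination of the summands $x^i(1+x)^{d-2i}$, each a padded binomial row that is symmetric and unimodal about $d/2$, and both use $\gamma_0=h(0)$ to force $\deg h=d$. The differences are small: the paper gets symmetry in one line from the invariance of $y$ under $x\mapsto 1/x$ rather than summand by summand. Your remark on the case $h(0)=0$ (symmetry only about $d/2$, not about the true degree) makes explicit a point the paper leaves implicit.
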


\begin{proof}
Since the substitution \(x\mapsto 1/x\) leaves \(y\) invariant, we have
\[
x^d h(1/x)
=
x^d\Bigl(1+\frac1x\Bigr)^d~
\Gamma_h \left(\frac{1/x}{(1+1/x)^2}\right)
=
(1+x)^d~\Gamma_h(y)
=
h(x).
\]
Thus \(h(x)\) is symmetric with center of symmetry \(d/2\).

Moreover, each polynomial \(x^i(1+x)^{d-2i}\) has coefficient sequence obtained from
that of \((1+x)^{d-2i}\) by padding with \(i\) zeros at the beginning and at the end.
Hence each \(x^i(1+x)^{d-2i}\) is symmetric and unimodal with center of symmetry \(d/2\).
Since \(h(x)\) is a nonnegative linear combination of these polynomials, it follows that
\(h(x)\) is unimodal.

Finally, if \(h(0)\neq 0\), then \(\gamma_0=h(0)>0\). Thus the term
\(\gamma_0(1+x)^d\) occurs with positive coefficient. Therefore \(\deg h(x)=d\).
\end{proof}

\begin{remark}
    Let $G$ be any finite simple graph and \(H= G \circ 2K_1\) be the graph obtained by joining each vertex of $G$ to two new vertices. It is known that \(H\) has a symmetric independence polynomial \cite{Stevanovic,hibi2026independencepolynomialsgraphs}. Moreover, it was shown in \cite{2whisker_unimodal} that the independence polynomial of \(H\) is  unimodal. It can also be easily verified that $P_H(x)$ is $\gamma$-positive. This follows from observing that 
    \[
   P_H(x)= \sum_{i=0}^{\alpha(G)} g_i x^i(1+x)^{2n-2i}\]
    where $|V(G)|=n$ and $g_i$ is the number of independent sets of size $i$ in $G$.
\end{remark}

We now introduce the notion of \(\gamma\)-admissibility for rooted graphs.

\begin{defn}\label{def:gamma-admissible}
A rooted graph \((G,r)\) is called \emph{\(\gamma\)-admissible} if there exist an integer
\(d\geq 0\) and polynomials \(A(y),B(y)\in \mathbb{Z}_{\geq 0}[y]\) such that
\[
P_G(x)=(1+x)^dA(y),
\qquad
P_{G-r}(x)=(1+x)^dB(y).
\]
\end{defn}

\begin{ex}
Let \(R_3=P_3\) be rooted at its middle vertex \(r\). Then \((R_3,r)\) is \(\gamma\)-admissible since
\[
P_{R_3}(x)=1+3x+x^2=(1+x)^2(1+y),
\qquad
P_{R_3-r}(x)=1+2x+x^2=(1+x)^2
\]
where  \(A(y)=1+y\) and \(B(y)=1\) such that
\(A(y)-B(y)=y\in \mathbb Z_{\ge 0}[y]\).
\end{ex}

\begin{remark}\label{rem:gamma-admissible}  
If \((G,r)\) is \(\gamma\)-admissible,  one has
\[
x^dP_G(1/x)=P_G(x),\qquad x^dP_{G-r}(1/x)=P_{G-r}(x).
\]
 This implies that both \(P_G(x)\) and \(P_{G-r}(x)\)  have degree $d$ since independence polynomials have constant term \(1\). %Hence both \(P_G(x)\) and \(P_{G-r}(x)\) are symmetric with center of symmetry \(d/2\). 
Since \(P_G(x)\) and \(P_{G-r}(x)\)  are symmetric of degree $d$, by  the standard theory of \(\gamma\)-positivity for symmetric polynomials (see \cite{Gamma_positive}), there are unique coefficients \(a_0,\dots,a_{\lfloor d/2\rfloor} \in \mathbb{Z}_{\geq 0}\) and  \(b_0,\dots,b_{\lfloor d/2\rfloor} \in \mathbb{Z}_{\geq 0}\) 
such that
\[
A(y)=\sum_{i=0}^{\lfloor d/2\rfloor}a_i y^i, \qquad B(y)=\sum_{i=0}^{\lfloor d/2\rfloor}b_i y^i.
\]
 Therefore, \(P_G(x)\) and \(P_{G-r}(x)\) are
\(\gamma\)-positive. Hence, both are symmetric and unimodal by \cref{lem:gamma-positive}.
\end{remark}

The following result, which we refer to as \emph{Bridge Lemma}, is useful in creating symmetric and unimodal independence polynomials for larger graphs from smaller ones.

\begin{lem}{{\bf(Bridge Lemma)}}\label{lem:admissible-gluing}
Let \((T,r)\) and \((U,s)\) be \(\gamma\)-admissible  with
\[
P_T(x)=(1+x)^dA(y), \qquad P_{T-r}(x)=(1+x)^dB(y),
\]
\[
P_U(x)=(1+x)^eC(y), \qquad P_{U-s}(x)=(1+x)^eD(y),
\]
where \(A(y),B(y),C(y),D(y)\in \mathbb Z_{\ge 0}[y]\). Suppose that \(A(y)-B(y)\in \mathbb Z_{\ge 0}[y]\).

If \((W,r)=(T,r)\vee(U,s)\), then 
\((W,r)\) is \(\gamma\)-admissible.
\end{lem}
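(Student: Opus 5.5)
We will use the standard vertex-deletion recurrence for independence polynomials: for any vertex $v$ of a graph $G$, one has $P_G(x)=P_{G-v}(x)+x\,P_{G-N[v]}(x)$. We apply it at the root $r$ of $W$ and then rewrite everything in the variable $y=x/(1+x)^2$.

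First, $W-r$ is the disjoint union of $T-r$ and $U$, since the only edge joining $T$ and $U$ is $\{r,s\}$. Hence
\[
P_{W-r}(x)=P_{T-r}(x)\,P_U(x)=(1+x)^{d+e}B(y)C(y).
\]

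Next, we compute $P_W$. Partition the independent sets of $W$ according to whether they contain $r$.
\begin{itemize}
\item Those avoiding $r$ are the independent sets of $T-r$ combined with those of $U$.
\item Those containing $r$ avoid $s$. They correspond to an independent set of $T$ containing $r$ together with an independent set of $U-s$.
\end{itemize}
The generating function of independent sets of $T$ containing $r$ is $P_T(x)-P_{T-r}(x)=(1+x)^d\bigl(A(y)-B(y)\bigr)$. Therefore
\[
P_W(x)=P_{T-r}P_U+(P_T-P_{T-r})P_{U-s}=(1+x)^{d+e}\Bigl(B(y)C(y)+\bigl(A(y)-B(y)\bigr)D(y)\Bigr).
\]
The statement asks for polynomials with nonnegative integer coefficients in $y$, and these two formulas provide them:
\[
P_W(x)=(1+x)^{d+e}A'(y),\qquad P_{W-r}(x)=(1+x)^{d+e}B'(y),
\]
with $A'=BC+(A-B)D$ and $B'=BC$. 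Since $B,C,D\in\mathbb Z_{\ge0}[y]$ and, by hypothesis, $A-B\in\mathbb Z_{\ge0}[y]$, both $A'$ and $B'$ lie in $\mathbb Z_{\ge0}[y]$. The same integer $d+e$ serves as the exponent for both polynomials, as the definition of $\gamma$-admissibility requires, so $(W,r)$ is $\gamma$-admissible.

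The only place the hypothesis $A-B\ge0$ enters is in guaranteeing nonnegativity of $A'$. Without it, $BC+(A-B)D$ could have negative coefficients, so this is the key point and the only genuine obstacle.

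We also record a by-product useful for iterating the lemma: $A'-B'=(A-B)D\in\mathbb Z_{\ge0}[y]$. Thus $(W,r)$ again satisfies the hypothesis placed on $(T,r)$, and the construction can be repeated.
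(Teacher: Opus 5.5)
Your proof is correct and follows the paper's argument. Both derive $P_W=P_{T-r}P_U+(P_T-P_{T-r})P_{U-s}$ and read off $A'=BC+(A-B)D$ and $B'=BC$; the paper gets there via the deletion recurrence at $r$ and the identity $xP_{T-N[r]}=P_T-P_{T-r}$, while you use the equivalent direct partition of independent sets, and your extra observation $A'-B'=(A-B)D\in\mathbb Z_{\ge0}[y]$ is also correct (the paper's iterations don't need it, since they always place $(R_3,r)$ on the left).
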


\begin{proof}
By the standard recursion at the root \(r\), we have
\[
P_W(x)=P_{W-r}(x)+xP_{W-N[r]}(x).
\]
Since \(W-r=(T-r)\sqcup U\) and  \(W-N[r]=(T-N[r])\sqcup(U-s)\),
we obtain
\[
P_W(x)=P_{T-r}(x)P_U(x)+xP_{T-N[r]}(x)P_{U-s}(x).
\]
Since \(xP_{T-N[r]}(x)=P_T(x)-P_{T-r}(x)\),
we have
\[
P_W(x)=P_{T-r}(x)P_U(x)+\bigl(P_T(x)-P_{T-r}(x)\bigr)P_{U-s}(x).
\]
Substituting the \(\gamma\)-admissible expressions gives
\[
P_W(x)=(1+x)^{d+e}\bigl(B(y)C(y)+(A(y)-B(y))D(y)\bigr),
\]
and
\[
P_{W-r}(x)=P_{T-r}(x)P_U(x)=(1+x)^{d+e}B(y)C(y).
\]
Since \(B(y),C(y),D(y)\), and \(A(y)-B(y)\) all belong to \(\mathbb Z_{\ge 0}[y]\),
it follows that \((W,r)\) is \(\gamma\)-admissible. 
\end{proof}

\section{Applications of the Bridge Lemma}

We now apply the Bridge Lemma to rooted trees to construct a tree on $n$ vertices with a symmetric and unimodal independence polynomial as well as the existence of a symmetric and unimodal independence polynomial of degree $d$ for a tree.  

The present section is devoted for a proof of Theorem \ref{CHITOSE}.  Theorem \ref{CHITOSE} (1) follows from Lemmas \ref{lem:tree-large-orders} and \ref{lem:sym_tree_vertex}.  Theorem \ref{CHITOSE} (2) follows from Lemma \ref{lem:tree_sym_alpha}. 

\begin{lem}
\label{lem:tree-large-orders}
Given \(n\geq 19\), there is a tree \(T\) on \(n\) vertices
whose independence polynomial is symmetric and unimodal.
\end{lem}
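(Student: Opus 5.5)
The plan is to build every required tree by repeated bridging onto a fixed base rooted tree, and to use the Bridge Lemma (\cref{lem:admissible-gluing}) to keep track of $\gamma$-admissibility. The key observation comes from the proof of that lemma. If $(T,r)$ is $\gamma$-admissible with $A-B\in\mathbb Z_{\ge0}[y]$ and $(U,s)$ is any $\gamma$-admissible rooted tree, then $W=(T,r)\vee(U,s)$ has
\[
A_W=BC+(A-B)D,\qquad B_W=BC,
\]
and hence $A_W-B_W=(A-B)D\in\mathbb Z_{\ge0}[y]$. So the hypothesis of the Bridge Lemma is inherited by the bridged tree at the same root $r$. We may therefore bridge arbitrarily many $\gamma$-admissible rooted trees one after another onto the root $r$ of a single base. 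Every tree produced this way is $\gamma$-admissible, so by \cref{rem:gamma-admissible} its independence polynomial is symmetric and unimodal. The number of vertices is additive under bridging.

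For the base I would use $(R_3,r)$, the path $P_3$ rooted at its middle vertex. By the Example above it is $\gamma$-admissible with $A-B=y$. Bridging copies of $(R_3,r)$ onto it produces admissible trees on $3k$ vertices for every $k\ge1$. This handles $n\equiv0\pmod 3$.

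For the other residues I need one extra $\gamma$-admissible rooted tree $(X,s)$ whose order is $\equiv2\pmod 3$. The threshold $19$ suggests $|X|=8$:
\begin{itemize}[leftmargin=*]
\item For $n\equiv 2\pmod 3$, take $R_3\vee X$ (order $11$) and then bridge on copies of $R_3$.
\item For $n\equiv1\pmod3$, take $(R_3\vee X)\vee X$ (order $19$) and then bridge on copies of $R_3$.
\end{itemize}
Both copies of $X$ are attached at the root of $R_3$, so $X$ itself need not satisfy $A-B\ge0$; only the root of the base matters. Together these cover all $n\ge19$, with the case $n\equiv 0$ already done.

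The main obstacle is exhibiting $(X,s)$. I need $P_X$ and $P_{X-s}$ to be symmetric of the same degree $d$, with nonnegative $\gamma$-vectors. I would look among small caterpillars and spiders, rooting at a vertex whose deletion leaves a forest with the same independence number. A natural attempt is a tree made of a few $P_3$'s whose centres are linked through the root, with the leaf counts arranged so that deleting $s$ does not lower $\alpha$. For each candidate I would compute $P_X$ and $P_{X-s}$ and expand them in the basis $x^i(1+x)^{d-2i}$. If no order-$8$ tree works, I would instead take any admissible block of order $\equiv2\pmod 3$ of order at most $11$, together with its double. This would still give a bound of the form $n\ge n_0$; pushing $n_0$ down to $19$ may then need a separate admissible block of order $\equiv1\pmod 3$ and order at most $16$.
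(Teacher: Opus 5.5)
Your inheritance identity $A_W-B_W=(A-B)D$ is correct, so hanging any number of $\gamma$-admissible blocks on the root of $(R_3,r)$ is legitimate; this is essentially the mechanism the paper uses. The gap is the block $(X,s)$. No $\gamma$-admissible rooted tree of order $8$ exists, and neither do any of your fallback blocks. The paper's exhaustive search finds no $\gamma$-admissible rooted tree of any order $n\not\equiv 0\pmod 3$ with $4\le n\le 18$. Orders $1$ and $2$ fail trivially: for $K_1$ the polynomials $P$ and $P_{-r}$ have different degrees, and $1+2x$ is not symmetric. Your scheme would also produce $\gamma$-admissible trees on $11$, $14$ and $17$ vertices, whereas the table records none.

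Order $8$ can be checked by hand. Admissibility forces $P_X$ to be symmetric, and the only such tree on $8$ vertices is $G_8=C(1,0,0,0,2)$. Let $\ell$ be the leaf at $v_1$ and $a,b$ the leaves at $v_5$. Then $G_8$ has the unique maximum independent set $M=\{\ell,v_2,v_4,a,b\}$, of size $5$. Since $P_{X-s}$ must also have degree $5$ and leading coefficient $1$, the graph $X-s$ contains exactly one independent set of size $5$. That set is maximum in $G_8$, hence equals $M$, so $s\in\{v_1,v_3,v_5\}$. For these roots, $P_{G_8-s}$ is $1+7x+16x^2+15x^3+6x^4+x^5$, $1+7x+16x^2+14x^3+6x^4+x^5$, and $1+7x+17x^2+18x^3+8x^4+x^5$ respectively, and none of these is symmetric.

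Within your method every tree produced is $\gamma$-admissible, with order equal to the sum of its block orders. So any block carrying a nonzero residue mod $3$ must have order at least $19$. Since $19+19=38$, reaching $n=19$ and $n=20$ requires separate blocks of orders exactly $19$ and $20$.

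That is the missing ingredient, and it is what the paper supplies. It exhibits explicit rooted trees $(R_{19},r)$ and $(R_{20},r)$, unique up to isomorphism, and verifies the factorizations $P_{R_{19}}=(1+x)^{13}(1+6y+9y^2+4y^3+y^4)$ and $P_{R_{19}-r}=(1+x)^{13}(1+5y+6y^2+y^3)$, and the analogous ones for $R_{20}$. It then bridges copies of $(R_3,r)$ onto these. Without such explicit blocks your argument proves only the case $n\equiv 0\pmod 3$. The threshold $19$ is not $3+8+8$; it is the smallest order not divisible by $3$ at which a $\gamma$-admissible tree exists.
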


\begin{proof} 
Let \((R_{19},r)\) be the rooted tree defined as follows. The root \(r\)
has neighbors \(a,b,u,v\), where \(a\) and \(b\) are leaves. The vertex \(u\)
is adjacent to a leaf \(c\) and to a path \(p_1p_2p_3\) through the edge
\(up_1\). The vertex \(v\) is adjacent to four leaves \(d_1,d_2,d_3,d_4\) and
to two vertices \(x\) and \(y\), and each of \(x\) and \(y\) is adjacent to two
leaves, labeled $e_1$, $e_2$ and $f_1$, $f_2$ respectively.

\begin{figure}[ht]
    \centering
    \includegraphics[scale=0.75]{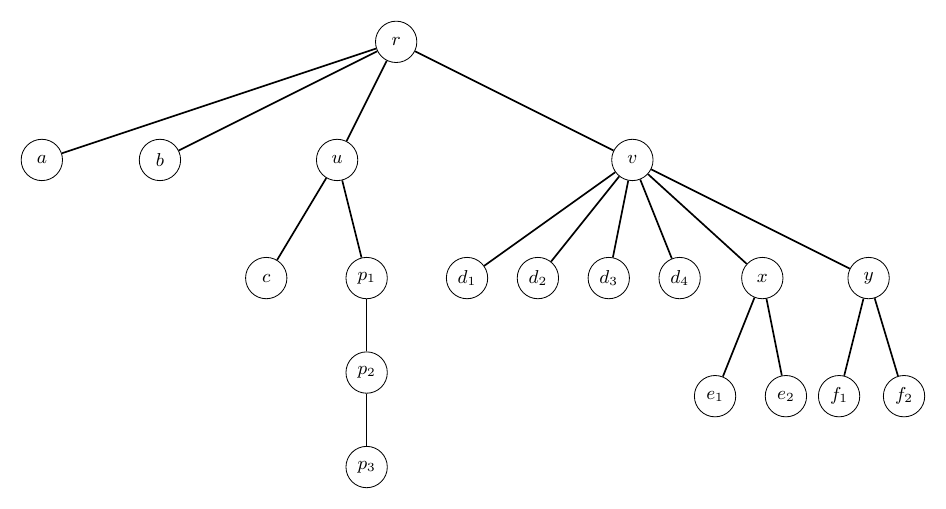}
    \label{fig:R19}
    \caption{The rooted tree $(R_{19},r)$}
\end{figure}

A direct computation gives
\begin{align*}
P_{R_{19}}(x)
&=1+19x+153x^2+701x^3+2058x^4+4112x^5+5772x^6\\
&\qquad +5772x^7+4112x^8+2058x^9+701x^{10}+153x^{11}+19x^{12}+x^{13}\\
&=(1+x)^{13}(1+6y+9y^2+4y^3+y^4).
\end{align*}
and
\begin{align*}
P_{R_{19}-r}(x)
&=1+18x+139x^2+616x^3+1763x^4+3462x^5+4817x^6\\
&\qquad +4817x^7+3462x^8+1763x^9+616x^{10}+139x^{11}+18x^{12}+x^{13}\\
&=(1+x)^{13}(1+5y+6y^2+y^3).
\end{align*}
Hence \((R_{19},r)\) is \(\gamma\)-admissible.

Let \((R_{20},r)\) be the rooted tree defined as follows. The root \(r\)
has neighbors \(a,u,p\), where \(a\) is a leaf. The vertex \(u\) is adjacent to
a leaf \(b\) and to a path \(u_1u_2u_3\) through the edge \(uu_1\). The vertex
\(p\) is adjacent to a vertex \(q\). The vertex \(q\) is adjacent to two leaves
\(c_1,c_2\), to a vertex \(t\) with two leaves $d_1$, $d_2$, and to a vertex \(s\). Finally,
the vertex \(s\) is adjacent to two leaves $e_1, e_2$ and to a vertex \(z\), and \(z\) is
adjacent to two leaves $f_1$, $f_2$.

\begin{figure}[ht]
    \centering
    \includegraphics[scale=0.75]{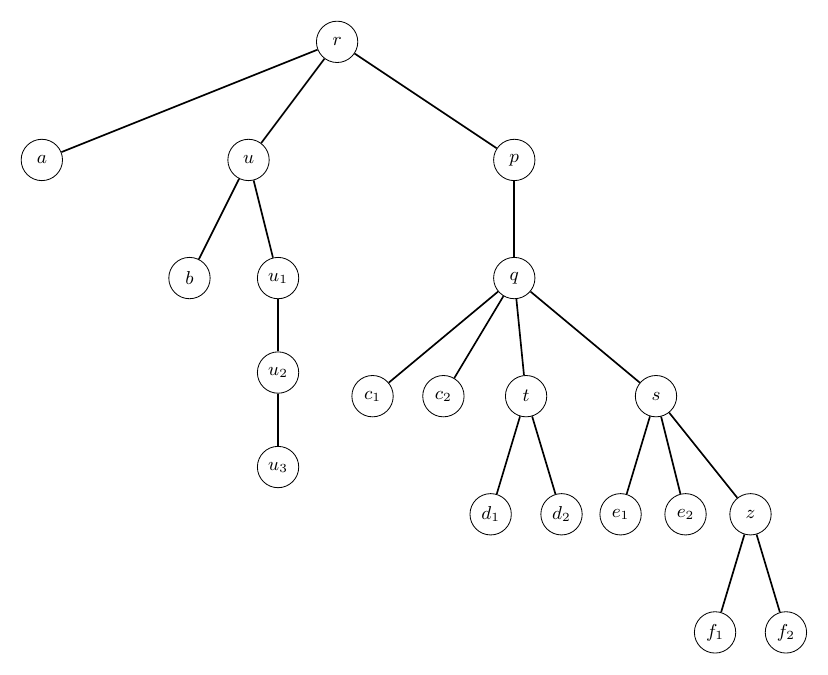}
    \caption{The rooted tree $(R_{20},r)$}
    \label{fig:r20}
\end{figure}

A direct computation gives
\begin{align*}
P_{R_{20}}(x)
&=1+20x+171x^2+829x^3+2548x^4+5255x^5+7496x^6\\
&\qquad +7496x^7+5255x^8+2548x^9+829x^{10}+171x^{11}+20x^{12}+x^{13}\\
& =(1+x)^{13}(1+7y+16y^2+14y^3+4y^4)
\end{align*}
and
\begin{align*}
P_{R_{20}-r}(x)
&=1+19x+155x^2+722x^3+2151x^4+4343x^5+6129x^6\\
&\qquad +6129x^7+4343x^8+2151x^9+722x^{10}+155x^{11}+19x^{12}+x^{13}\\
&=(1+x)^{13}(1+6y+11y^2+7y^3+y^4).
\end{align*}
Hence \((R_{20},r)\) is \(\gamma\)-admissible.

Now fix \(n\geq 19\).   Set \((T_0,\rho_0)=
\begin{cases}
(R_3,r), & \text{if }  n \equiv 0 \pmod 3,\\[2mm]
(R_{19},r), & \text{if }  n \equiv 1 \pmod 3,\\[2mm]
(R_{20},r), & \text{if }  n \equiv 2 \pmod 3.
\end{cases}\).

Then \(|V(T_0)|\equiv n \pmod 3\) and \(n=|V(T_0)|+3m\)
for some integer \(m\geq 0\).

For \(1\leq j\leq m\), define recursively \((T_j,\rho_j)=(R_3,r)\vee (T_{j-1},\rho_{j-1})\). Since \((R_3,r)\) and \((T_{j-1},\rho_{j-1})\) are \(\gamma\)-admissible and
\(A(y)-B(y)=y\in \mathbb Z_{\geq 0}[y]\) for \((R_3,r)\), repeated applications of the Bridge Lemma
(\Cref{lem:admissible-gluing}) show that each \((T_j,\rho_j)\) is \(\gamma\)-admissible.

Each bridging step adds exactly three vertices, so \(|V(T_m)|=|V(T_0)|+3m=n\).
Therefore \(T_m\) is a tree on \(n\) vertices. In each case (\(n  \pmod 3\)),   the resulting tree has symmetric and unimodal independence polynomial by \Cref{rem:gamma-admissible}.
\end{proof}

\begin{remark}
A computer search using Macaulay2 over all nonisomorphic trees on at most \(21\) vertices shows
the following. The rooted path \(R_3=P_3\), rooted at its middle vertex, is the
smallest $\gamma$-admissible rooted tree. Repeated applications of
\cref{lem:admissible-gluing} to \(R_3\) yield $\gamma$-admissible rooted trees on
\(6,9,12,15,\) and \(18\) vertices.   On the other hand, among orders not
divisible by \(3\), there are no $\gamma$-admissible rooted trees for \(4 \leq n \leq 18\).

At \(n=19\), there is, up to isomorphism, a unique $\gamma$-admissible tree. Moreover, among the \(317,955\) nonisomorphic
trees on \(19\) vertices, exactly \(6\) have symmetric  and unimodal independence polynomial.
Likewise, at  \(n=20\), there is, up to isomorphism, a unique $\gamma$-admissible
tree. Among the \(823,065\)
nonisomorphic trees on \(20\) vertices, exactly \(14\) have symmetric and unimodal
independence polynomial. Among the 2,144,505 nonisomorphic trees on \(21\) vertices, exactly 22 have symmetric and unimodal independence polynomial, of which 14 are $\gamma$-admissible.  
\end{remark}

\begin{center}
\begin{tabular}{|c|c|c|c|}
\hline
    $n$ & \# of non-isomorphic trees & \# of symmetric trees & \# of $\gamma$-admissible trees  \\
    \hline
    3 & 1 & 1 & 1  \\
    \hline
    4 & 2 & 0 & 0 \\
    \hline 
    5 & 3 & 0 & 0 \\
    \hline
    6 & 6 & 1 & 1 \\
    \hline
    7 & 11 & 0 & 0 \\
    \hline
    8 & 23 & 1 & 0 \\
    \hline
    9 & 47 & 1 & 1 \\
    \hline
    10 & 106 & 0 & 0  \\
    \hline
    11 & 235 & 2 & 0 \\
    \hline
    12 & 551 & 3 & 2 \\
    \hline
    13 & 1301 & 3 & 0 \\
    \hline
    14 & 3159 & 1 & 0 \\
    \hline
    15 & 7741 & 4 & 3 \\
    \hline
    16 & 19320 & 2 & 0 \\
    \hline
    17 & 48629 & 4 & 0 \\
    \hline 
    18 & 123867 & 15 & 6 \\
    \hline 
    19 & 317955 & 6 & 1 \\
    \hline
    20 & 823065 & 14 & 1 \\
    \hline 
    21 & 2144505 & 22 & 14\\
    \hline
\end{tabular}
\end{center}

Given nonnegative integers \(a_1,\dots,a_m\), we introduce the graph  
\[
C(a_1,\dots,a_m)
\]
which is 
obtained from a path \(v_1\cdots v_m\) by attaching
\(a_i\) leaves to \(v_i\) for each \(i\).

\begin{figure}[ht]
    \centering
    \includegraphics[width=0.7\linewidth]{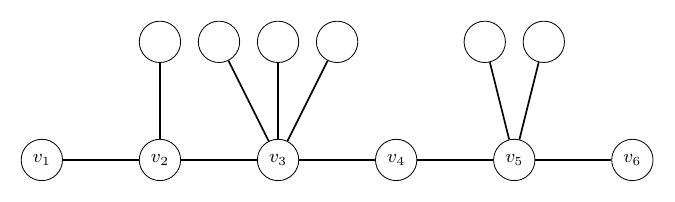}
    \caption{The graph $C(0,1,3,0,2,0)=C(2,3,0,3)$}
    \label{fig:caterpillar}
\end{figure}

\begin{lem}\label{lem:sym_tree_vertex}
  Let $n\in [1,18]$.  There is a  tree \(T\) on \(n\) vertices such that \(P_T(x)\) is symmetric and unimodal when \(n\notin\{2,4,5,7,10\}\). When  $n\in \{2,4,5,7,10\}$, there is no tree with a symmetric  independence polynomial.
\end{lem}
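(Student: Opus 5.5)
The lemma has two parts, and I would treat them separately.

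\emph{Existence for $n\in\{1,3,6,8,9,11,\dots,18\}$.} For $n\equiv 0\pmod 3$ the Bridge Lemma already suffices. Take $R_3$ for $n=3$, and iterate $(R_3,r)\vee(\cdot)$ as in the proof of \Cref{lem:tree-large-orders} to get $n=6,9,12,15,18$. By \Cref{rem:gamma-admissible}, each resulting tree has a $\gamma$-positive, hence symmetric and unimodal, independence polynomial. The case $n=1$ is trivial, since $P_{K_1}=1+x$.

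For the remaining orders $n=8,11,13,14,16,17$ the table shows that no $\gamma$-admissible tree exists, so each case needs an explicit tree. I would look for these among caterpillars $C(a_1,\dots,a_m)$. Their independence polynomials come from a transfer-matrix recursion along the spine: track the states ``$v_i$ in the set'' and ``$v_i$ not in the set'', with weights $x$ and $(1+x)^{a_i}$ respectively. This makes a small search feasible.

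For each $n$ I would exhibit one tree, list $P_T(x)$, and check symmetry and unimodality directly. Alternatively, I would write $P_T(x)=(1+x)^{\alpha}\Gamma(y)$ with $\Gamma\in\mathbb Z_{\ge0}[y]$, which proves both properties at once by \Cref{lem:gamma-positive}, even without admissibility of the rooted pair. For example, for $n=8$ the degree must be $\alpha(T)$ with $g_{\alpha-1}=8$. The natural candidate is a spine $v_1v_2v_3$ with leaf counts chosen so that $\alpha=4$ and $P=(1+x)^4+3x(1+x)^2$, or something similar. These witnesses would be found by computer and verified by hand.

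\emph{Nonexistence for $n\in\{2,4,5,7,10\}$.} Symmetry of degree $\alpha=\alpha(T)$ forces $g_{\alpha}=1$ and $g_{\alpha-1}=n$. The condition $g_\alpha=1$ means $T$ has a unique maximum independent set $S$. Each vertex of $S$ can then be removed to give $\alpha$ distinct independent sets of size $\alpha-1$. The condition $g_{\alpha-1}=n$ constrains the other sets of size $\alpha-1$ strongly. For trees, a standard fact (a tree with a unique maximum independent set has $\alpha\ge n/2$, with each vertex outside $S$ having at least two neighbours in $S$) limits the possible structures.

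For $n=2$ the path $P_2$ gives $1+2x$, which is not symmetric. For $n=4,5,7$ the trees are few (2, 3 and 11 of them), so I would enumerate them and compute $P_T$ directly. For $n=10$ there are 106 trees. Here I would first use $g_\alpha=1$, $g_{\alpha-1}=n$ and the middle symmetry conditions to cut down to a handful of candidates, then check those by direct computation. Failing a clean argument, I would rely on the exhaustive computer search already reported in the table.

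\emph{Main obstacle.} I expect the nonexistence part, especially $n=10$, to be the hardest to make human-readable. I would first try the constraint $g_1=g_{\alpha-1}$ together with the uniqueness structure of $S$, and fall back on the computational verification if needed.
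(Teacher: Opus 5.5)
Your decomposition matches the paper's proof. It uses $K_1$ for $n=1$, iterated bridging of $(R_3,r)$ for $n\in\{3,6,9,12,15,18\}$, explicit caterpillars checked by direct computation for $n\in\{8,11,13,14,16,17\}$, and $P_{P_2}=1+2x$ plus an exhaustive machine check for $n\in\{4,5,7,10\}$. The paper gives no structural argument for $n=10$ either, so your fallback is exactly what it does. The gap is that for the six orders not divisible by $3$ the witnesses \emph{are} the proof, and you do not produce them. The paper supplies $G_8=C(1,0,0,0,2)$, $G_{11}=C(1,0,1,2,0,1)$, $G_{13}=C(1,0,1,4,0,1)$, $G_{14}=C(2,1,0,0,0,2,2)$, $G_{16}=C(2,1,0,1,0,1,1,0,1)$ and $G_{17}=C(1,0,0,0,2,3,1,0,1)$, together with their polynomials. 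Until such trees are written down and their polynomials checked, that half of the lemma is only a search plan.

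The one concrete hint you give also points the wrong way. Your candidate $(1+x)^4+3x(1+x)^2=1+7x+12x^2+7x^3+x^4$ has $g_1=7$, so it cannot be $P_T$ for an $8$-vertex tree. In fact degree $4$ is impossible at $n=8$. For a tree, $\alpha=4=n/2$ forces a perfect matching by K\"onig's theorem. Then both colour classes are maximum independent sets, so $g_4\ge 2$ and $P_T$ cannot be symmetric. Moreover, a three-vertex spine carrying five leaves already has $\alpha\ge 5$. By the table, the unique symmetric tree on $8$ vertices is $G_8$, with $P_{G_8}=(1+x)^5(1+3y+2y^2)$; it has degree $5$ and diameter $6$.

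Your $\gamma$-positivity shortcut is a valid sufficient criterion, but it is not available in general. For $G_{13}$ one finds $P_{G_{13}}=(1+x)^9(1+4y+2y^2-2y^3-y^4)$. In that case unimodality has to be read directly off the coefficients, as the paper does.
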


\begin{proof}
For \(n=2\), the only tree is \(P_2\) where \(P_{P_2}(x)=1+2x\)
is not symmetric.

For \(n\in\{4,5,7,10\}\), it was verified by a finite computer check using Macaulay2 that for each tree \(T\) on \(n\) vertices \(P_T(x)\) is never symmetric.

For \(n=1\), take \(G=K_1\). Then $P_G(x)=1+x$, which is symmetric and unimodal.

For \(n=3\), take \(G=P_3\). Then
\(P_G(x)=1+3x+x^2\), which is symmetric and unimodal.

For \(n=6,9,12,15,18\),  repeated applications of the Bridge Lemma
(\cref{lem:admissible-gluing}) to the rooted path \((R_3,r)\) with its middle vertex $r$
as root yield trees with symmetric and unimodal independence polynomials.

For the remaining $n$, one may take
\[
G_8:=C(1,0,0,0,2), \quad
G_{11}:=C(1,0,1,2,0,1), \quad
G_{13}:=C(1,0,1,4,0,1),
\]
\[
G_{14}:=C(2,1,0,0,0,2,2), \quad
G_{16}:=C(2,1,0,1,0,1,1,0,1), \quad
G_{17}:=C(1,0,0,0,2,3,1,0,1).
\]

A direct computation of the independence polynomials (given below) of the above caterpillars gives symmetric and unimodal independence polynomials.
\begin{align*}
P_{G_8}(x)
&=1+8x+21x^2+21x^3+8x^4+x^5,\\
P_{G_{11}}(x)
&=1+11x+45x^2+88x^3+88x^4+45x^5+11x^6+x^7,\\
P_{G_{13}}(x)
&=1+13x+66x^2+176x^3+279x^4+279x^5+176x^6+66x^7+13x^8+x^9,\\
P_{G_{14}}(x)
&=1+14x+78x^2+226x^3+377x^4+377x^5+226x^6+78x^7+14x^8+x^9,\\
P_{G_{16}}(x)
&=1+16x+105x^2+369x^3+764x^4+970x^5\\
&\qquad  \qquad \qquad \qquad \qquad \qquad +764x^6+369x^7+105x^8+16x^9+x^{10},\\
P_{G_{17}}(x)
&=1+17x+120x^2+465x^3+1101x^4+1676x^5+1676x^6\\
&\qquad \qquad \qquad \qquad \qquad \qquad +1101x^7+465x^8+120x^9+17x^{10}+x^{11}. \qedhere
\end{align*}
\end{proof}

Lastly, we fix the degree of an independence polynomial and construct trees with symmetric and unimodal independence polynomial of given degree.

\begin{lem}\label{lem:tree_sym_alpha}
   Given $d\geq 1$ with $d\neq 3$, there is a tree $T$ such that $P_T(x)$ is symmetric and unimodal with $\deg P_T(x)=d$.
\end{lem}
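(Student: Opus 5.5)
The plan is to read off degrees from the constructions already available and to use the Bridge Lemma to move the degree in steps of $2$. Small cases come first. For $d=1$, $K_1$ gives $1+x$. For $d=2$, $R_3=P_3$ gives $1+3x+x^2$. From \Cref{lem:sym_tree_vertex} we also have degree $5$ ($G_8$), degree $7$ ($G_{11}$), degree $9$ ($G_{13}$ or $G_{14}$), degree $10$ ($G_{16}$) and degree $11$ ($G_{17}$). From the proof of \Cref{lem:tree-large-orders}, $(R_{19},r)$ and $(R_{20},r)$ are $\gamma$-admissible with polynomials of degree $13$.

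For the general step, I will use the degree formula inside the Bridge Lemma. Bridging $(R_3,r)$, with parameter $d=2$, onto a $\gamma$-admissible $(U,s)$ with parameter $e$ produces a $\gamma$-admissible tree with parameter $2+e$. By \Cref{rem:gamma-admissible}, its independence polynomial is symmetric and unimodal of degree exactly $e+2$. The iterated bridges $(T_j,\rho_j)$ starting from $R_3$ therefore give every even degree $2,4,6,\dots$; concretely, $T$ on $3k$ vertices has degree $2k$. Starting instead from $R_{19}$, the same iteration gives every odd degree $13,15,17,\dots$.

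It remains to fill the odd degrees $5,7,9,11$, which the caterpillars above already provide, and to argue that $d=3$ is excluded. For the latter, I do not need nonexistence, since the lemma simply omits $d=3$. So the proof is pure bookkeeping:
\[
d=1\colon K_1,\qquad d\ \text{even}\colon \text{iterated bridges of } R_3,\qquad d\in\{5,7,9,11\}\colon G_8,\,G_{11},\,G_{13},\,G_{17},\qquad d\ge 13\ \text{odd}\colon R_{19}\ \text{bridged with } R_3 \text{ repeatedly}.
\]

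The main point to check carefully is that the degree of $P_W$ really equals the parameter $d+e$, and not something smaller. This follows from \Cref{rem:gamma-admissible}: the symmetry $x^{d+e}P_W(1/x)=P_W(x)$ together with $P_W(0)=1$ forces the leading coefficient to be $1$ in degree $d+e$. Equivalently, $A(0)=1$, so $\gamma_0=1>0$ and \Cref{lem:gamma-positive} applies.

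Alternatively, one could seek a single $\gamma$-admissible seed of degree $5$ to cover all odd degrees uniformly. That would require a new search, and the explicit caterpillars suffice, so I will not pursue it.
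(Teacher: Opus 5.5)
Your proposal is correct and matches the paper's proof essentially step for step: $K_1$ for $d=1$, iterated bridges of $R_3$ for even $d$, the caterpillars for $d\in\{5,7,9,11\}$, and $R_{19}$ bridged with copies of $R_3$ for odd $d\ge 13$ (you use $G_{13}$ where the paper uses $G_{14}$ for $d=9$; both have degree $9$). The paper also proves that degree $3$ is impossible, since $g_2=\binom{n}{2}-(n-1)=n$ would force $n^2-5n+2=0$; as you note, this is not needed for the existence statement.
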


\begin{proof}
We first show that degree \(3\) is impossible. Suppose that \(T\) is a tree on
\(n\) vertices such that \(P_T(x)\) is symmetric and \(\deg P_T(x)=3\). Then
\[
P_T(x)=1+nx+g_2x^2+x^3.
\]
Since \(P_T(x)\) is symmetric, we must have \(g_2=n\). On the other hand, a tree
on \(n\) vertices has exactly \(n-1\) edges. So the number of independent sets
of cardinality \(2\) is
\[
g_2=\binom{n}{2}-(n-1)=n,
\]
 that is, \(n^2-5n+2=0,\)
which has no integer solution. Thus there is no tree \(T\) such that
\(P_T(x)\) is symmetric and \(\deg P_T(x)=3\).

 For \(d=1\), take \(T=K_1\). Then \(P_T(x)=1+x\). Now let \(d\ge 2\) with \(d\neq 3\).

If \(d\) is even, write \(d=2k\) with \(k\ge 1\). Starting from \((R_3,r)\) and
applying \Cref{lem:admissible-gluing} using \(k\)-copies of \((R_3,r)\), we obtain a tree whose independence
polynomial is symmetric and unimodal of degree \(2k=d\).

If \(d\in\{5,7,9,11\}\), one may take \(G_8,G_{11},G_{14},G_{17}\), respectively.

Finally, let \(d\ge 13\) be odd. Write \(d=13+2m\) with \(m\ge 0\). Starting from
\((R_{19},r)\) and repeatedly bridging \(m\) copies of \((R_3,r)\) on the left,
\Cref{lem:admissible-gluing} yields a tree whose independence polynomial is symmetric
and unimodal of degree \(13+2m=d\). 
\end{proof}

\section*{Closing Remarks}

A supplementary catalogue accompanying this paper records the finite computations behind the small-order counts. It lists every nonisomorphic tree on at most $21$ vertices whose independence polynomial is symmetric (and unimodal), together with the corresponding independence polynomial. It also records, for each $\gamma$-admissible tree, the factorization
\[
P_T(x)=(1+x)^dA(y), \qquad P_{T-r}(x)=(1+x)^dB(y), \qquad y=\frac{x}{(1+x)^2},
\]
for every admissible root.

\bibliographystyle{abbrv}
\bibliography{ref}

\end{document}